\newtheorem{anyprop}{Anyprop}[section]
\newtheorem{theorem}[anyprop]{Theorem}
\theoremstyle{definition}
\newtheorem{definition}[anyprop]{Definition}
\newtheorem{remark}[anyprop]{Remark}
\theoremstyle{remark}
\numberwithin{equation}{section}
\begin{document}
\title[CONTAINMENT-DIVISION RINGS AND DEDEKIND DOMAINS]
{CONTAINMENT-DIVISION RINGS AND NEW CHARACTERIZATIONS OF DEDEKIND DOMAINS}


\author[Danny A. J. G\'omez-Ram\'irez]{Danny Arlen de Jes\'us G\'omez-Ram\'irez}
\author[Juan D. Velez]{Juan D. V\'elez}
\author[Edisson Gallego]{Edisson Gallego}
\address{Vienna University of Technology, Institute of Discrete Mathematics and Geometry,
wiedner Hauptstaße 8-10, 1040, Vienna, Austria.}
\address{Universidad Nacional de Colombia, Escuela de Matem\'aticas,
Calle 59A No 63 - 20, N\'ucleo El Volador, Medell\'in, Colombia.}
\address{University of Antioquia, Calle 67 \# 53-108, Medell\'in, Colombia}
\email{daj.gomezramirez@gmail.com}
\email{jdvelez@unal.edu.co}
\email{egalleg@gmail.com}



\begin{abstract}
We introduce a new class of commutative rings with unity, namely, the Containment-Division
Rings (CDR-s). We show that this notion has a very exceptional origin, since it was essentially 
co-discovered with the qualitative help of a computer program (i.e. The Heterogeneous Tool Set (HETS)). Besides, we show that in a Noetherian setting, 
the CDR-s are just another way of describing Dedekind domains. Simultaneously, we see that for CDR-s, the Noetherian condition can be replaced by a weaker
Divisor Chain Condition.

\end{abstract}

\maketitle

\noindent Mathematical Subject Classification (2010): 13B99, 68T99

\smallskip

\noindent Keywords: Dedekind Domains, chain conditions, containment, divisor
\newline\indent

\section*{Introduction}

During the last decades we have seen very rapid advances in the development of 
artificial devices which allow us to verify and complete some quite intricate mathematical proofs 
e.g. the four-color theorem \cite{appel}, \cite{appel2}, \cite{gonthier}; the Feit-Thompson odd order theorem \cite{gonthier2}; and the Kepler Conjecture \cite{hales}, \cite{hales2}; among others.

Moreover, in the former cases the essential concepts behind the proofs were developed by mathematicians and a sophisticated computational support was required in order to handle the huge number of cases to be verified. 

On the other hand, the outstanding work of S. Colon on automated theory formation \cite{colton} and the early versions of his program HR \cite{colton3}, were able to re-discover the number-theoretical notion of refactorable number \cite{colton2}. 

Now, in \cite{gomezetal} and \cite{gomez2}, it was shown how a new valuable and interesting mathematical notion was co-invented with the help of the Heterogeneous Tool Set (HETS) \cite{mossakowski}.

Effectively, during the process of expressing in HETS the mathematical concept of a prime ideal of a commutative ring with unity as a formal blending (i.e. colimit) of the two notions of a meta-prime number and an ideal of a commutative ring with unity; a new condition appears involving the containment and the divisibility relations among the ideals of a ring. So, inspired by this condition the notion of \emph{Containment-Division Ring} (CDR) was proposed (see next section) and subsequently it was discovered that such a class of rings are strongly related with Dedekind Domains.

More specifically, in \cite{gomezetal} and \cite{gomez2} a formalization of concepts as theories in a many-sorted first-order logic with proper signatures was used. Besides, the notion of conceptual blending of two input concepts with commonalities codified through a generic space was computed as the colimit of the corresponding `V'-diagram. 

So, the sentence defining the notion of containment-division ring emerged when we consider the axiom defining the upside-down divisibility relation:

\[ (\forall a,b\in Z)(a \lfloor b\leftrightarrow (\exists c\in Z)(a=c*b)),\]

which belongs to one of the input spaces, i.e., the notion of prime element in a quasi-monoid $(Z,*,1)$ (i.e., $*$ is a binary operation with neutral element $1$). Now, the former condition was reinterpreted in the colimit (blend) concept computed by HETS using as a second concept, the notion of ideals over commutative rings with unity, including a sort $G$ for the collection of all ideals. Effectively, the `conceptual' morphisms between the generic and the input spaces induced syntactic replacements `merging' (i.e., identifying) the sort denoting the quasi-monoid $Z$ with the sort denoting the collection of all ideals of a commutative ring with unity $R$; the upside-down divisibility relation with the containment relation between ideals; and the binary operation in $Z$ with the product of ideals. Thus, the resulting axiom in the blended concept was of the form

\[(\forall a,b \in G)[(a \subseteq b)\leftrightarrow (\exists c\in G)(a=c\cdot_{\iota}b)],\]

where $G$ denotes the set (sort) of ideals of $R$, and $\cdot_{\iota}$ denotes the product of ideals.

Now, the former axiom was exactly the kind of `surprising condition', which we baptized as the \emph{containment-division condition}, allowing us to discover a new class of commutative rings with unity that we will study in the next section.


\section{Containment-Division Rings}

\begin{definition}
Let $R$ denote a commutative ring with unity. We say that $R$ is a
\emph{Containment-Division Ring (CDR)}, if for any two ideals $I$ and $J$, it holds
that $I\subseteq J$ if and only if $J$ divides $I$, i.e., if there exists an
ideal $H$ such that $I=HJ$.
\end{definition}

Now, it is straightforward to see that principal ideal domains are, in fact,
CDR-s. Besides, in the setting of integer domains and as it was stated in a 
more informal context in \cite{gomez2}, we will prove formally in this
section that the Noetherian CDR-s are very close related to the Dedekind
domains, i.e., integral domains with the additional property that every
proper ideal can be written as a finite product of ideals \cite[Theorem 37.1
and 37.8]{coleman}. At the same time, we will see that for Noetherian CDR-s
the ascending chain condition can be re-written in the form of a \emph{%
Divisor Chain Condition} (DiCC), i.e., a commutative ring with unity
fulfills the DiCC if for any chain of ideals $I_{1}\subseteq I_{2}\subseteq
\cdots \subseteq I_{n}\subseteq \cdots $ such that $I_{j+1}$ divides $I_{j},$
the chain is stationary.

The formal statement is the following:

\begin{theorem}
\label{CDRDedekind} The following two conditions are equivalent for an
integral domain $R$:

\begin{enumerate}
\item $R$ is a dedekind domain.

\item $R$ is a noetherian CDR.

\item $R$ is a CDR fulfilling the Divisor Chain Condition.
\end{enumerate}
\end{theorem}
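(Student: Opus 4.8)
The plan is to exploit the asymmetry in the defining biconditional of a CDR. Observe first that in \emph{any} commutative ring with unity the implication ``$J$ divides $I$'' $\Rightarrow$ ``$I\subseteq J$'' is automatic, since $I=HJ\subseteq RJ=J$. Hence being a CDR amounts to the single implication ``to contain is to divide'': whenever $I\subseteq J$ there is an ideal $H$ with $I=HJ$. This observation also disposes of the equivalence (2) $\Leftrightarrow$ (3) immediately: in a CDR the relation $I_{j}\subseteq I_{j+1}$ is \emph{identical} to the relation ``$I_{j+1}$ divides $I_{j}$'', so the hypothesis appearing in the Divisor Chain Condition is satisfied by every ascending chain. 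Thus for a CDR the DiCC is literally the ascending chain condition, i.e.\ the Noetherian property, and (2) and (3) coincide. It therefore remains to prove (1) $\Leftrightarrow$ (2).

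For (2) $\Rightarrow$ (1) I would show directly that every nonzero proper ideal of a Noetherian CDR factors as a product of prime (in fact maximal) ideals, which is the defining property of a Dedekind domain. Given a nonzero proper ideal $I$, choose a maximal ideal $\fom_{1}\supseteq I$; by the CDR property $\fom_{1}$ divides $I$, say $I=\fom_{1}I_{1}$, and then $I\subseteq I_{1}$. The key point is that this inclusion is \emph{strict}: if $I=I_{1}$ we would have $I=\fom_{1}I$ with $I$ finitely generated (Noetherian), so by the Cayley--Hamilton form of Nakayama's lemma there is $x\in 1+\fom_{1}$ with $xI=0$; since $R$ is a domain and $I\neq 0$ this forces $x=0$, whence $1\in\fom_{1}$, contradicting that $\fom_{1}$ is proper. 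Iterating, I produce a strictly increasing chain $I\subsetneq I_{1}\subsetneq I_{2}\subsetneq\cdots$ with $I_{k}=\fom_{k+1}I_{k+1}$ and each $I_{k}$ nonzero; the ascending chain condition forces the process to halt, which can only occur when some $I_{n}=R$. Back-substitution then yields $I=\fom_{1}\fom_{2}\cdots\fom_{n}$, a product of maximal ideals.

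For (1) $\Rightarrow$ (2) I would use that a Dedekind domain is Noetherian and that each of its nonzero ideals is invertible as a fractional ideal, a standard consequence of the prime factorization \cite[\S 37]{coleman}. Given ideals $I\subseteq J$ with $J\neq 0$, set $H:=IJ^{-1}$; then $H\subseteq JJ^{-1}=R$, so $H$ is an integral ideal, and $HJ=IJ^{-1}J=I$, so $J$ divides $I$. The degenerate cases ($I=0$ or $J=0$) are handled by taking $H=0$ or $H=R$. Combined with the automatic reverse implication, this shows $R$ is a CDR, completing (1) $\Leftrightarrow$ (2).

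The main obstacle is the strict-inclusion step in (2) $\Rightarrow$ (1): it is precisely there that the three hypotheses (integral domain, properness of the maximal ideal, finite generation) must all be brought to bear through Nakayama, in order to prevent the ``peeling off'' of maximal factors from stalling at a fixed ideal. Everything else is formal bookkeeping: the reverse implication of the CDR condition is automatic, the equivalence (2) $\Leftrightarrow$ (3) is the tautology ``containment $=$ divisibility'' inside a CDR, and the termination of the factorization is purely the chain condition.
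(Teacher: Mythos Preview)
Your proof is correct. The treatment of (2)\,$\Leftrightarrow$\,(3) and of (1)\,$\Rightarrow$\,(2) is essentially the paper's (you are just more explicit in the latter, writing $H=IJ^{-1}$ rather than citing the containment--division property of Dedekind domains). The genuine difference is in (2)\,$\Rightarrow$\,(1). The paper also peels off prime factors $I=I_1P_1$, $I_1=I_2P_2$, \dots, but it does \emph{not} argue that the inclusions $I_k\subseteq I_{k+1}$ are strict; instead it lets the Noetherian hypothesis stabilize the chain at some $I_m$, observes $I_m=I_mP_{m+1}$, and then invokes Krull's intersection theorem to conclude $I_m\subseteq\bigcap_i I_m^i=0$, so that $I=(0)$ is prime after all. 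Your route---using the Cayley--Hamilton form of Nakayama to force $I_k\subsetneq I_{k+1}$, hence termination at $I_n=R$ and an explicit factorization $I=\fom_1\cdots\fom_n$---is more elementary (Nakayama is strictly weaker than Krull's intersection, which needs Artin--Rees on top of Nakayama) and more constructive (it actually exhibits the prime factorization rather than reaching a contradiction). The paper's approach, on the other hand, avoids having to check nonzeroness of the $I_k$ at every step, since it is content to land at $I=(0)$.
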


\begin{proof}
First let us prove that (1) is equivalent to (2): On the one hand, it is a
clear fact that Dedekind domains are Noetherian \cite[Theorem 37.1]{coleman}%
. Furthermore, it is a classical result that Dedekind Domains fulfill the
Containment-Division Condition (\cite[Fundamental Theorem of OAK-s]{weiss}).
In fact, if we consider two ideals $I$ and $J$ of $R$, if $I\subseteq J$,
then by factoring each ideal, and then localizing on the primes appearing on
their factorizations one obtains the desired division condition \cite[%
Theorem 37.11]{coleman}.

On the other hand, let us consider a proper ideal $I$ of $R$. The case where 
$I$ is a prime ideal is clear. Otherwise, let $P_{1}$ be a prime ideal of $R$
such that $I\subseteq P_{1}$. Then, there exists a proper ideal $I_{1}$ such
that $I=I_{1}P_{1}$, because $R$ is a CDR. If $I_{1}$ is a prime ideal, then
we can clearly express $I$ as a product of two ideals. Otherwise, let us
choose again a prime ideal $P_{2}$ containing $I_{1}$. Hence, there is
another proper ideal $I_{2}$ such that $I_{1}=I_{2}P_{2}$. In the case that $%
I_{2}$ is prime, we can express $I=I_{2}P_{2}P_{1}$ as a finite product of
prime ideals. Otherwise, we could continue inductively in a similar manner.
If for some $r$, the ideal $I_{r}$ is prime, we can write $I$ as a finite
product of prime ideals. If not, one obtains an ascending chain of ideals 
\begin{equation*}
I_{1}\subseteq I_{2}\subseteq I_{3}\subseteq \cdots \subseteq I_{n}\subseteq
\cdots 
\end{equation*}%
Because $R$ is Noetherian, this sequence stops at some point (i.e., there
exists some $m\in \mathbb{N}$ such that for all $i\geq m$, $I_{i}=I_{m}$).
Moreover, $I_{m}=I_{m+1}P_{m+1}=I_{m}P_{m+1}$ and so $I_{m}=I_{m}^{i}P_{m+1}%
\subseteq I_{m}^{i}$, for all $i\in \mathbb{N}$.

Therefore, $I$ and $I_{m}$ are contained in the intersection of the powers
of $I_{m}$, $\cap _{i\geq 1}I_{m}^{i}$. Now, by Krull's Intersection Theorem 
\cite[Corollary 5.4]{eisenbud}, this intersection must be zero. So, $I=(0)$,
which is a prime ideal. In conclusion, $R$ is a Dedekind domain.

Finally, we verify that conditions (2) and (3) are equivalent:

In fact, due to the Containment-Division condition, any ascending chain of
ideals $I_{1}\subseteq I_{2}\subseteq \cdots \subseteq I_{n}\subseteq \cdots 
$ is a divisor chain of ideals and vice versa. So, the ascending chain
condition defining Noetherian rings, which in general implies the DiCC,
turns out to be equivalent to the last one in the setting of CDR-s.
\end{proof}
 
\begin{remark}
The Divisor Chain Condition involved in the third numeral above has also emerged as a natural consequence of 
replacing the containment relation with the divisibility one within the Ascending Chain Condition defining a Noetherian
 ring. So, this intermediate notion between the classes of Noetherian and Non-Noetherian rings can be seen as a natural human-program discovery, since the seminal idea of identifying, in a common setting, the former relations between ideals (i.e., containment and divisibility) was first (co-) suggested by HETS in the context of \cite{gomezetal} and \cite{gomez2} as mentioned before. 
In conclusion, the former theorem is one of the very exceptional instances of a mathematical result whose involved concepts were co-discovered with the qualitative help of a computer program, and which has, simultaneously, enough mathematical value on its own. 
\end{remark}

\section*{acknowledgements}
Danny Arlen de Jes\'us G\'omez-Ram\'irez was funded firstly by the European Commission (SP7-ICT-2013-10), FET-Open Grant number: 611553, and subsequently he was also supported by the Vienna Science and Technology Fund (WWTF) as part of the Vienna Research Group 12-004. Finally, he wants to thanks Rafael Betancur, Eduar Cata\~no, Miller Gonzalez and Juan Otalvaro for the support during all these years.

\end{document}